\documentclass[12pt]{amsart}
\usepackage{amsmath,amssymb,amsthm,amscd}

\newtheorem{theorem}{Theorem}[section]
\newtheorem{proposition}[theorem]{Proposition}
\newtheorem{lemma}[theorem]{Lemma}
\newtheorem{corollary}[theorem]{Corollary}
\newtheorem{remark}[theorem]{Remark}
\newtheorem{definition}[theorem]{Definition}
\newtheorem{example}[theorem]{Example}

\newcommand{\C}{{\mathbb C}}

\newcommand{\Q}{{\mathbb Q}}
\newcommand{\R}{{\mathbb R}}
\newcommand{\Z}{{\mathbb Z}}

\renewcommand{\L}{{\mathcal L}}

\newcommand{\Pl}{{\mathbb P}}
\newcommand{\Qbar}{\bar{\Q}}

\DeclareMathOperator{\Spec}{Spec}

 \DeclareMathOperator{\Pic}{Pic}

 \DeclareMathOperator{\N}{N}
\DeclareMathOperator{\Pico}{Pic}

\newcommand{\vfi}{{\varphi}}

\newcommand{\bQ}{{\mathbb Q}}

\newcommand{\cO}{\mathcal{O}}

\newfont{\cyrr}{wncyr10}
\title{Canonical metrics of commuting maps}
\author{J.~Pineiro}
\thanks{I would like to express my gratitude to Professor Lucien Szpiro for the valuable discussions on the subject of this paper.
\\ The author was partially supported by the PSC-CUNY Award 60029-3637}

 \subjclass[2000]{Primary: 14G40; Secondary: 28C10, 14K22, 14H52, 16W22}

\address{
Department of Mathematics and Computer Science\\
Bronx Community College of CUNY\\
University Ave. and West 181 Street\\
Bronx, NY 10453}
 \email{jorge.pineiro@bcc.cuny.edu}

\begin{document}

\begin{abstract}
Let $\vfi : X \rightarrow X$ be a map on an projective variety. It
is known that whenever $\vfi^*: \Pico(X) \otimes \R \rightarrow
\Pico(X) \otimes \R$ has an eigenvalue $\alpha
> 1$, we can build a canonical measure, a canonical height and a canonical
metric associated to $\vfi$.  In the present work, we establish the
following fact: if two \textbf{commuting maps} $\vfi, \psi : X
\rightarrow X$ satisfy these conditions, for eigenvalues $\alpha$
and $ \beta$ and the same eigenvector $\L$, then the canonical
metric, the canonical measure, and the canonical height associated
to both maps, are identical.
\end{abstract}

\maketitle

 \section{Introduction}
Let $X$ be a projective variety defined over a number field $K$.
 Suppose that $\vfi : X \rightarrow X$ is a map on $X$, also defined over $K$.  Assume that we can find an ample line bundle
$\L$ on $X$ and a number $\alpha > 1$, such that $ \L^{\alpha} \cong
\varphi^*\L$. Under this conditions, we can build the canonical
height $\hat{h}_{\vfi}$ (\cite{silvermancanonicalheights} theorem
1.1)
 and the canonical measure $d\mu_{\vfi}$
(\cite{dynamics} proposition 3.1.4) associated to $\vfi$ and $\L$.
They satisfy nice properties with respect to the map $\vfi$, for
example we have $\hat{h}_{\vfi} \circ \vfi=\deg(\vfi)
\hat{h}_{\vfi}$ and $\vfi_* \mu_{\vfi}=\mu_{\vfi}.$ Sometimes it
happens that a whole set of maps are associated to the same
canonical height function and measure.  As our first example
consider the collection of maps $\phi_k : \Pl_{\bar{K}}^1
\rightarrow \Pl_{\bar{K}}^1$ on the Riemann Sphere, where $\phi_k$
is defined as $\phi_k(t)=t^k$. The line bundle $\L=\cO(1)$ on
$\Pl^1$ satisfies the isomorphism $\phi_k^* \L \cong \L^k$. If one
builds the canonical height and measure associated to $\phi_k$ and
$\cO(1)$, one obtains:
\begin{enumerate}
\item All $\phi_k$ have the same canonical height namely, the naive height
$h_{nv}$ on $\Pl^1_{\bar{\Q}}$.  The naive height $h_{nv}(P)$ is a
refined idea of the function $\sup \{|a_0|,|a_1| \}$, measuring the
computational complexity of the projective point $P=(a_0:a_1)$. For
a precise definition see later definition \ref{naive height}.
\item All $\phi_k$ have the same canonical measure, that is, the Haar measure $d\theta$
on the unit circle $S^1$.
\end{enumerate}
Similar properties are fulfilled by the collection of maps $[n] : E
\rightarrow E$, representing multiplication by $n$ on an elliptic
curve $E$ defined over $K$.  If $\L$ is an ample symmetric line
bundle on $E$, we have the isomorphism $[n]^* \L \cong \L^{n^2}$,
along with the properties:
\begin{enumerate}
\item All maps $[n]$ share the same canonical height, that is, the Neron-Tate height
$\hat{h}_{E}$ on $E$.  In fact this will be our definition
(\ref{NT}) of the Neron-Tate height on $E$.  For many other
interesting properties we refer to B-4 in \cite{intro-diophantine}.
\item All maps $[n]$ have the same canonical measure, that is, the Haar measure $i/2 Im(\tau) dz \wedge d\bar{z}$ on
$E = \C / \Z+\tau \Z$.
\end{enumerate}
We observe that any two maps in each collection commute for the
composition of maps.  Besides, the line bundle $\L \in \Pic(X)
\otimes \R$, suitable to make everything work, is the same within
each collection.  The present work establish the general fact:
\begin{proposition} Let $X$ be a projective variety defined over a number field $K$.  Suppose that two
maps $\varphi, \psi : X \rightarrow X$  commute ( $\varphi \circ
\psi = \psi \circ \varphi$) and satisfy the following property: For
some ample line bundle $\L \in \Pic(X) \otimes \R$ and real numbers
$\alpha, \beta
> 1$, we have
  $\varphi^*\L \xrightarrow{\sim} \L^{\alpha}$ and $\psi^*\L \xrightarrow{\sim} \L^{\beta}$, then we have
   $ \hat{h}_{\vfi}=\hat{h}_{\psi}=\hat{h}_{\varphi \circ
\psi}$ and
   $ d\mu_{\vfi}=d\mu_{\psi}=d\mu_{\vfi \circ \psi}$.
\end{proposition}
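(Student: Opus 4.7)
The plan is to reduce both claims to Tate-style uniqueness results: the canonical height $\hat{h}_\vfi$ is characterized on $X(\Kbar)$ as the unique function satisfying $\hat{h}_\vfi = h_\L + O(1)$ for a Weil height $h_\L$ attached to $\L$, together with the transformation law $\hat{h}_\vfi \circ \vfi = \alpha \hat{h}_\vfi$; similarly, the canonical metric $\|\cdot\|_\vfi$ on $\L$ at each archimedean place $v$ is the unique continuous metric bounded-multiplicatively by a reference metric and satisfying $\vfi^*\|\cdot\|_\vfi = \|\cdot\|_\vfi^\alpha$ under the fixed isomorphism $\vfi^*\L \cong \L^\alpha$. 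The strategy is to verify that the objects attached to $\vfi$ also satisfy the analogous transformation law for $\psi$, so that uniqueness forces equality with the objects attached to $\psi$.

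For the heights, I would set $f := \hat{h}_\vfi \circ \psi - \beta\, \hat{h}_\vfi$. Functoriality of Weil heights together with $\psi^*\L \cong \L^\beta$ gives $h_\L \circ \psi = \beta h_\L + O(1)$, so $f$ is bounded. The commutation $\vfi \circ \psi = \psi \circ \vfi$ and $\hat{h}_\vfi \circ \vfi = \alpha \hat{h}_\vfi$ yield
$$f \circ \vfi = \hat{h}_\vfi \circ \vfi \circ \psi - \beta\, \hat{h}_\vfi \circ \vfi = \alpha(\hat{h}_\vfi \circ \psi) - \alpha\beta\, \hat{h}_\vfi = \alpha f,$$
whence $f \circ \vfi^n = \alpha^n f$; boundedness of the left side together with $\alpha > 1$ forces $f \equiv 0$. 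Hence $\hat{h}_\vfi$ satisfies the defining properties of $\hat{h}_\psi$, so $\hat{h}_\vfi = \hat{h}_\psi$. For the composite, $(\vfi \circ \psi)^* \L \cong \L^{\alpha\beta}$ and $\hat{h}_\vfi \circ (\vfi \circ \psi) = \alpha\, \hat{h}_\vfi \circ \psi = \alpha\beta\, \hat{h}_\vfi$, so the same uniqueness criterion gives $\hat{h}_\vfi = \hat{h}_{\vfi \circ \psi}$.

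The argument for metrics and measures runs in exact parallel, multiplicatively. At an archimedean place $v$, let $g := \log(\psi^*\|\cdot\|_\vfi) - \beta \log \|\cdot\|_\vfi$, a well-defined continuous function on $X(\C_v)$ via the fixed isomorphism $\psi^*\L \cong \L^\beta$, and bounded by comparison with the reference metric. The commutation identity gives
$$\vfi^*(\psi^*\|\cdot\|_\vfi) = \psi^*(\vfi^*\|\cdot\|_\vfi) = \psi^*(\|\cdot\|_\vfi^\alpha) = (\psi^*\|\cdot\|_\vfi)^\alpha,$$
so $g \circ \vfi = \alpha g$ and hence $g \equiv 0$ by the same Tate-style argument. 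This means $\psi^*\|\cdot\|_\vfi = \|\cdot\|_\vfi^\beta$, and uniqueness of the canonical metric for $\psi$ yields $\|\cdot\|_\vfi = \|\cdot\|_\psi$; taking the top power of the associated curvature form then gives $d\mu_\vfi = d\mu_\psi$. The analogous computation $(\vfi \circ \psi)^* \|\cdot\|_\vfi = \psi^*(\|\cdot\|_\vfi^\alpha) = \|\cdot\|_\vfi^{\alpha\beta}$ identifies $\|\cdot\|_\vfi$ as the canonical metric of $\vfi \circ \psi$ as well.

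The main obstacle I anticipate is essentially bookkeeping with the isomorphisms: one must fix $\vfi^*\L \xrightarrow{\sim} \L^\alpha$ and $\psi^*\L \xrightarrow{\sim} \L^\beta$ coherently so that the two composite isomorphisms $(\vfi\psi)^*\L \to \L^{\alpha\beta}$ obtained via $\psi$-then-$\vfi$ on one side, and via the identification $\vfi\psi = \psi\vfi$ followed by $\vfi$-then-$\psi$ on the other, agree on the nose and not merely up to a nonzero scalar. Once this coherence is arranged, the arguments above collapse to Tate's telescoping trick in the presence of a second commuting map: in each case, the error between the $\vfi$-canonical object and its $\psi$-transform is bounded and scales by $\alpha$ under $\vfi$, hence must vanish.
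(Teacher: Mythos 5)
Your proof is correct, but it reaches the conclusion by a genuinely different route than the paper. The paper works entirely at the level of metrics: in Proposition \ref{commuting canonical metrics} it exploits the fact, built into Theorem \ref{canonical metric}, that Zhang's limit metric does not depend on the metric one starts the iteration with; starting the $\vfi$-iteration at $\|.\|_{\psi}$ and the $\psi$-iteration at $\|.\|_{\vfi}$ and interchanging the two uniformly convergent limits via $\vfi \circ \psi = \psi \circ \vfi$ gives $\|.\|_{\vfi}=\|.\|_{\psi}$ directly, after which the equalities of canonical measures and canonical heights are immediate because both objects are defined (definitions \ref{canonical measure} and \ref{canonical height as intersection}) purely in terms of $(\L,\|.\|_{\vfi})$. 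You instead run two Tate-style uniqueness arguments: for heights, the Call--Silverman characterization of $\hat h_{\psi}$ (\cite{silvermancanonicalheights}) plus the vanishing of the bounded error $f=\hat h_{\vfi}\circ\psi-\beta\hat h_{\vfi}$, which scales by $\alpha$ under $\vfi$; for metrics, the vanishing of the bounded log-ratio $g$, showing that $\|.\|_{\vfi}$ satisfies the $\psi$-functional equation, and then uniqueness of the canonical metric, which is exactly the independence-of-starting-metric statement of Theorem \ref{canonical metric}. Two remarks on the comparison. First, in the paper's framework the canonical height is an adelic intersection number defined for arbitrary cycles, so the height equality in that generality is most naturally deduced from the metric equality (as the paper does); your direct height argument handles points, and your metric argument, run at every place $v$ (not only archimedean ones, which it does verbatim), recovers the general statement, so nothing is lost. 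Second, the coherence-of-isomorphisms issue you flag is real but harmless: changing the isomorphisms $\L^{\alpha}\xrightarrow{\sim}\vfi^*\L$ and $\L^{\beta}\xrightarrow{\sim}\psi^*\L$ only rescales the canonical metrics by constants at each place, which do not affect the curvature measures and cancel in the heights by the product formula; the paper makes the same silent normalization. On balance, your route replaces the paper's slightly delicate interchange of double limits by a cleaner functional-equation-plus-uniqueness argument, at the cost of invoking the uniqueness characterizations explicitly.
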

This result is known in dimension one, a proof can found for example
in \cite{eremenko}.  Also it is a well known fact
\cite{intro-diophantine}, that commuting maps in a projective
variety must share the same canonical height. The main feature of
the present work it is to obtain all this results from the equality
of the canonical metrics.  Given a ample line bundle $\L$ on $X$, it
was an original idea of Arakelov \cite{arakelovinter} to put metrics
on $\L_{\sigma}=\L \otimes_{\sigma} \C$ over all places $\sigma$ of
$K$ at infinity. This gave rise to heights as intersection numbers
and curvature forms at infinity.  In was then an idea of Zhang
\cite{zhangadelic} to look for suitable metrics at all places $v$ of
$K$.  In presence of the dynamics $\vfi : X \rightarrow X$, the line
bundle $\L$ on $X$ can be endowed \cite{zhangadelic} with very
special metrics $\|.\|_{\vfi,v}$ on $\L_v$ that satisfy the
functional equation
$$\|.\|_{\varphi,v}=(\phi^*\varphi^*\|.\|_{\varphi,v})^{1/\alpha},$$
whenever we have an isomorphism $\phi : \L^{\alpha}
\xrightarrow{\sim} \varphi^*\L$.  The canonical height and the
canonical metric will be defined (definitions \ref{canonical
measure} and \ref{canonical height as intersection}) depending only
on the metric $\|.\|_{\vfi}$. The equality of canonical heights and
measure for commuting maps is a consequence of the following
proposition:
\begin{proposition}
Suppose that two maps $\varphi, \psi : X \rightarrow X$ commute, and
for some ample line bundle $\L \in \Pic(X) \otimes \R$ we have
$\varphi^*\L \xrightarrow{\sim} \L^{\alpha}$ and $\psi^*\L
\xrightarrow{\sim} \L^{\beta}$ for some numbers $\alpha, \beta > 1$,
then $\|.\|_{\vfi}=\|.\|_{\psi}$.
\end{proposition}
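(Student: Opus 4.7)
The plan is to exploit the characterization of $\|.\|_{\vfi,v}$ as the \emph{unique} continuous metric on $\L_v$ satisfying the functional equation displayed above. This uniqueness is the output of a contraction-mapping argument: the operator
$T_\vfi(\|.\|):=(\phi^*\vfi^*\|.\|)^{1/\alpha}$
strictly contracts in the sup-distance on the space of continuous metrics on $\L_v$ (the factor $1/\alpha$ with $\alpha>1$ is precisely what gives contraction), so it admits a unique fixed point, namely $\|.\|_{\vfi,v}$; the analogous statement holds for $\psi$ with exponent $1/\beta$.

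Granted this, I would construct a candidate metric by setting
$$\|.\|':=\bigl(\psi^*\|.\|_\vfi\bigr)^{1/\beta},$$
using the isomorphism $\L^\beta\xrightarrow{\sim}\psi^*\L$ to read the right-hand side as a metric on $\L$. The heart of the argument is then to check that $\|.\|'$ is itself a fixed point of $T_\vfi$. Using the commutation $\vfi\circ\psi=\psi\circ\vfi$ and the fixed-point equation $\vfi^*\|.\|_\vfi=(\|.\|_\vfi)^\alpha$, one computes
\begin{align*}
\vfi^*\|.\|' &=\bigl(\vfi^*\psi^*\|.\|_\vfi\bigr)^{1/\beta}
=\bigl(\psi^*\vfi^*\|.\|_\vfi\bigr)^{1/\beta}\\
&=\bigl(\psi^*(\|.\|_\vfi)^\alpha\bigr)^{1/\beta}
=\bigl(\psi^*\|.\|_\vfi\bigr)^{\alpha/\beta}
=(\|.\|')^{\alpha}.
\end{align*}
By uniqueness of the $\vfi$-canonical metric this forces $\|.\|'=\|.\|_\vfi$, which is exactly the equation $\psi^*\|.\|_\vfi=(\|.\|_\vfi)^\beta$. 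But this last identity is precisely the fixed-point characterization of $\|.\|_\psi$, so a second application of uniqueness — now for $\psi$ — yields $\|.\|_\vfi=\|.\|_\psi$. The case of $\vfi\circ\psi$ is handled identically: the pair $(\vfi,\vfi\circ\psi)$ also commutes, so repeating the argument with $\psi$ replaced by $\vfi\circ\psi$ and $\beta$ by $\alpha\beta$ gives $\|.\|_\vfi=\|.\|_{\vfi\circ\psi}$.

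The main obstacle I anticipate is almost entirely bookkeeping on the isomorphisms. One must track the chosen trivializations $\phi:\L^\alpha\xrightarrow{\sim}\vfi^*\L$ and $\phi':\L^\beta\xrightarrow{\sim}\psi^*\L$, verify that the two natural isomorphisms $\L^{\alpha\beta}\xrightarrow{\sim}(\vfi\circ\psi)^*\L$ obtained by composing the $\phi$'s in the two possible orders really agree under the identification induced by $\vfi\circ\psi=\psi\circ\vfi$, and make sure any scalar discrepancy is absorbed consistently in the normalization of the metric. This compatibility is the only spot where the commutativity hypothesis is used in an essential way; once it is formalized, the rest of the proof is the clean two-step uniqueness argument displayed above.
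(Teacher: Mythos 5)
Your argument is correct, but it takes a different route from the paper. The paper's proof works directly with the iteration of Theorem \ref{canonical metric}: since the limit metric does not depend on the starting metric, one writes $\|s(x)\|_{\vfi}$ as a limit of $\psi$-canonical values along $\vfi$-iterates and $\|s(x)\|_{\psi}$ as a limit along $\psi$-iterates, and then identifies the two by interchanging a double limit $\lim_{k,l}\|s(\vfi^k\circ\psi^l(x))\|^{1/\alpha^k\beta^l}$, justified by uniform convergence and $\vfi\circ\psi=\psi\circ\vfi$. You instead isolate the fixed-point formulation: $\|.\|_{\vfi}$ is the unique bounded continuous solution of $\|.\|=(\phi^*\vfi^*\|.\|)^{1/\alpha}$ (uniqueness follows either from your contraction estimate or simply from the ``independent of the choice of $\|.\|_{v,1}$'' clause of Theorem \ref{canonical metric}, by starting the iteration at a given solution), and then a one-line pullback computation using commutativity shows that $(\psi^*\|.\|_{\vfi})^{1/\beta}$ is again a solution for $\vfi$, whence $\|.\|_{\vfi}$ satisfies the $\psi$-equation and equals $\|.\|_{\psi}$. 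What your version buys is that it replaces the somewhat delicate double-limit interchange (asserted rather than detailed in the paper) by a finite algebraic verification plus two appeals to uniqueness; what the paper's version buys is that it never needs to name uniqueness as a separate ingredient, reading everything off the explicit limit formula. The isomorphism bookkeeping you flag (compatibility of the two identifications $\L^{\alpha\beta}\xrightarrow{\sim}(\vfi\circ\psi)^*\L$, and the fact that rescaling $\phi$ rescales the canonical metric) is a genuine point, but the paper's proof suppresses exactly the same issue, so it is not a gap relative to the paper; your treatment of $\vfi\circ\psi$ is likewise consistent with Proposition \ref{commuting maps}.
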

Towards the end of the paper we discuss maps on $\Pl^1$ arising as
projections of maps on elliptic curves with complex multiplication.
 We study ramification points and present examples of commuting maps on the Riemann sphere.
 \section{Canonical heights and canonical measures}
\subsection{Canonical metrics}
Consider the projective variety $X$ defined over a number field $K$,
a map $\vfi : X \rightarrow X$ defined over $K$, and an ample line
bundle $\L \in \Pic(X) \otimes \R$ such that $\phi : \L^{\alpha}
\xrightarrow{\sim} \varphi^*\L$ for some $\alpha
>1$.  This situation will be called \cite{dynamics} a polarized dynamical system $(X,\vfi,\L,\alpha)$ on $X$ defined over $K$. \\ Assume that for every place $v$ of $K$ we have chosen a continuous and bounded metric
$\|.\|_v$ on each fibre of $\L_v=\L \otimes_K K_v$. The following
theorem is proposition 2.2 in \cite{zhangadelic}:
\begin{theorem} \label{canonical metric}
The sequence defined recurrently by $\|.\|_{v,1}=\|.\|_v$ and
$\|.\|_{v,n}=(\phi^* \varphi^* \|.\|_{v,n-1})^{1/\alpha}$ for $n
> 1$, converge uniformly on $X(\bar{K}_v)$ to a metric
$\|.\|_{v,\varphi}$ (independent of the choice of $\|.\|_{v,1}$) on
$\L_v$ which satisfies the equation
$\|.\|_{\varphi,v}=(\phi^*\varphi^*\|.\|_{\varphi,v})^{1/\alpha}$.
\end{theorem}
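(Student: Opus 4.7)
This is a Banach fixed-point statement, so I would prove it by equipping the space of continuous bounded metrics on $\L_v$ with a natural complete metric and showing that the operator $T(\|\cdot\|) = (\phi^*\varphi^*\|\cdot\|)^{1/\alpha}$ is a strict contraction. The key structural fact is that for any two continuous metrics $\|\cdot\|_1, \|\cdot\|_2$ on the line bundle $\L_v$, their ratio $\|\cdot\|_1/\|\cdot\|_2$ is a well-defined continuous positive function on $X(\bar{K}_v)$ (local trivializations cancel in the quotient of norms on one-dimensional fibres). This lets me define
\[
d(\|\cdot\|_1, \|\cdot\|_2) = \sup_{x \in X(\bar{K}_v)} \bigl|\log(\|\cdot\|_1/\|\cdot\|_2)(x)\bigr|,
\]
and the space of continuous bounded metrics is complete under $d$, with convergence in $d$ being precisely uniform convergence of the metric functions on $X(\bar{K}_v)$.

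The main calculation is the contraction estimate. Since the isomorphism $\phi$ acts identically on numerator and denominator of the ratio, its contribution cancels, and a direct computation gives
\[
\frac{T(\|\cdot\|_1)}{T(\|\cdot\|_2)}(x) = \left(\frac{\|\cdot\|_1}{\|\cdot\|_2}\bigl(\varphi(x)\bigr)\right)^{1/\alpha}.
\]
Taking logarithms and suprema, and using $\varphi(X(\bar{K}_v)) \subseteq X(\bar{K}_v)$, yields $d(T\|\cdot\|_1, T\|\cdot\|_2) \leq (1/\alpha)\, d(\|\cdot\|_1, \|\cdot\|_2)$. Because $\alpha > 1$, the factor $1/\alpha$ is strictly less than one, so $T$ is a strict contraction.

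Applying the Banach fixed-point theorem to $T$ on this complete space then delivers the entire statement in one stroke: the iterates $\|\cdot\|_{v,n} = T^{n-1}(\|\cdot\|_{v,1})$ form a Cauchy sequence in $d$, converge uniformly to a unique fixed point $\|\cdot\|_{v,\varphi}$, the limit satisfies the functional equation $\|\cdot\|_{\varphi,v} = (\phi^*\varphi^*\|\cdot\|_{\varphi,v})^{1/\alpha}$ by construction, and independence of the starting metric is read off from $d(T^n\|\cdot\|_{v,1}, T^n\|\cdot\|_{v,1}') \leq \alpha^{-n}\, d(\|\cdot\|_{v,1}, \|\cdot\|_{v,1}')$. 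The main obstacle is really just the book-keeping at the edges: one must verify that $T$ preserves the class of continuous bounded metrics (continuity follows from continuity of $\varphi$ and of the trivialization given by $\phi$; boundedness uses compactness of $X(\bar{K}_v)$ in its analytic topology together with projectivity of $X$), and one must make sense of $(\cdot)^{1/\alpha}$ when $\alpha \notin \Z$, given that $\L \in \Pic(X)\otimes \R$—this is harmless, since we are taking a positive real power of a positive real-valued function, so the argument goes through for every real $\alpha > 1$.
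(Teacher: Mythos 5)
Your proposal is correct and is in substance the same argument as the paper's: your contraction estimate for $T$ in the sup-log distance is exactly the paper's bound $\|(\tfrac{1}{\alpha}\phi^*\varphi^*)^k h\|_{\sup}\le \alpha^{-k}\|h\|_{\sup}$ applied to $h=\log(\|\cdot\|_2/\|\cdot\|_1)$, the paper merely summing the resulting geometric (telescoping) series explicitly instead of invoking the Banach fixed-point theorem, which yields the same convergence, functional equation, and independence of the starting metric. The only caveat, shared implicitly with the paper, is that for non-archimedean $v$ the set $X(\bar{K}_v)$ is not compact, so finiteness of your distance $d$ and preservation of boundedness under $T$ should be justified via boundedness relative to a model metric (as in Zhang) rather than by compactness, which only covers the archimedean places.
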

\begin{proof}
 Denote by $h$ the continuous function $ \log \frac{\|.\|_2}{\|.\|_1}$ on $X(\bar{K}_v)$.
Then
$$\log\|.\|_n=\log\|.\|_1 + \sum_{k=0}^{n-2}(\frac{1}{\alpha} \phi^*\varphi^*)^k h.$$
Since $\|(\frac{1}{\alpha} \phi^*\varphi^*)^k h\|_{sup} \leq
(\frac{1}{\alpha})^k\|h\|_{sup}$, it follows that the series given
by the expression $\sum_{k=0}^{\infty} ( \frac{1}{\alpha}
\phi^*\varphi^*)^k h$, converges absolutely to a bounded and
continuous function $h^v$ on $X(\bar{K}_v)$. Let
$\|.\|_{\varphi,v}=\|.\|_1 \exp (h^v) $, then $\|.\|_n$ converges
uniformly to $\|.\|_{\varphi,v}$ and its not hard to check that
$\|.\|_{\varphi,v}$ satisfies
$$\|.\|_{\varphi,v}=(\phi^*\varphi^*\|.\|_{\varphi,v})^{1/\alpha},$$
which was the result we wanted to prove.
\end{proof}
\begin{definition}
The metric $\|.\|_{v,\varphi}$ is called the canonical metric on
$\L_v$ relative to the map $\vfi$.
\end{definition}

\begin{example}
Consider the line bundle $\L=\cO_{\Pl^n}(1)$ on $\Pl_{\Qbar}^n$ and
the rational map $\phi_k : \Pl_{\Q}^n \rightarrow \Pl_{\Q}^n$ given
by the expression $\phi(T_0:...:T_n)=(T_0^k:...:T_n^k)$.  The
Fubini-Study metric $$\|(\lambda_0 T_0+...+\lambda_n T_n
)(a_0:...:a_n)\|_{FS}=\frac{|\sum \lambda_i a_i |}{\sqrt{\sum_i
a^2_i}}$$ is a smooth metric on $\L_{\C}$. If we take
$\|.\|_1=\|.\|_{FS}$ as our metric at infinity, the limit metric we
obtain is
$$\|(\lambda_0 T_0+...+\lambda_n T_n)(a_0:...:a_n)\|_{nv}=\frac{|\sum \lambda_i
a_i |}{\sup_i(|a_i|)}.$$
\end{example}
\begin{example}
Suppose that $X=E$ is an elliptic curve and assume that $[n] : E
\rightarrow E$ is denoting the multiplication by n on $E$.  As a
consequence of the theorem of the cube, the ample symmetric line
bundle $\L$ on $E$ satisfies $\phi : [n]^*\L \xrightarrow{\sim}
\L^{n^2}$.  The canonical metric is the metric of the cube discussed
in \cite{moretasterisque} and suitable to make $\phi$ and
isomorphism of metrized line bundles.
\end{example}
The following proposition relates the canonical metrics associated
to commuting maps.  It represents the main result of this paper.
\begin{proposition} \label{commuting canonical metrics}
Let $(X,\vfi,\L,\alpha)$ and $(X,\psi,\L,\beta)$ be two polarized
systems on $X$ defined over $K$.  Suppose that the maps $\vfi$ and
$\psi$ satisfy $\vfi \circ \psi = \psi \circ \vfi$, then
$\|.\|_{\vfi}=\|.\|_{\psi}$.
\end{proposition}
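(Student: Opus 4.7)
The plan is to use the uniqueness statement in Theorem~\ref{canonical metric} twice. Fix a place $v$ of $K$, isomorphisms $\phi_{\vfi} : \L^{\alpha} \xrightarrow{\sim} \vfi^{*}\L$ and $\phi_{\psi} : \L^{\beta} \xrightarrow{\sim} \psi^{*}\L$, and write
$$F_{\vfi}(\|.\|) := (\phi_{\vfi}^{*}\vfi^{*}\|.\|)^{1/\alpha}, \qquad F_{\psi}(\|.\|) := (\phi_{\psi}^{*}\psi^{*}\|.\|)^{1/\beta}$$
for the two iteration operators on continuous bounded metrics on $\L_v$. Theorem~\ref{canonical metric} says $F_{\vfi}^{n}(\|.\|) \to \|.\|_{\vfi}$ for \emph{any} starting metric, and likewise for $F_{\psi}$, so any fixed point of $F_{\vfi}$ must equal $\|.\|_{\vfi}$ (and similarly for $F_{\psi}$). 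The whole strategy is therefore to prove that $F_{\psi}(\|.\|_{\vfi}) = \|.\|_{\vfi}$: iterating $F_{\psi}$ from $\|.\|_{\vfi}$ will then produce the constant sequence $\|.\|_{\vfi}$, whose limit is $\|.\|_{\psi}$ by the theorem, yielding $\|.\|_{\vfi}=\|.\|_{\psi}$.

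The key step is to show that $F_{\vfi}$ and $F_{\psi}$ commute on metrics, for then the computation $F_{\vfi}^{n}(F_{\psi}(\|.\|_{\vfi})) = F_{\psi}(F_{\vfi}^{n}(\|.\|_{\vfi})) = F_{\psi}(\|.\|_{\vfi})$ (using $F_{\vfi}(\|.\|_{\vfi})=\|.\|_{\vfi}$) together with the convergence $F_{\vfi}^{n}(F_{\psi}(\|.\|_{\vfi})) \to \|.\|_{\vfi}$ of Theorem~\ref{canonical metric} forces $F_{\psi}(\|.\|_{\vfi}) = \|.\|_{\vfi}$. Commutativity of $F_{\vfi}$ and $F_{\psi}$ is a bookkeeping exercise: expanding both sides and using that $\vfi\circ\psi=\psi\circ\vfi$ implies $\vfi^{*}\psi^{*}=\psi^{*}\vfi^{*}$ on pulled-back metrics, and that pullback commutes with $p$-th powers of metrics, everything reduces to the assertion that the two composite isomorphisms
$$\L^{\alpha\beta} \xrightarrow{\phi_{\vfi}^{\otimes\beta}} \vfi^{*}\L^{\beta} \xrightarrow{\vfi^{*}\phi_{\psi}} \vfi^{*}\psi^{*}\L \quad\text{and}\quad \L^{\alpha\beta} \xrightarrow{\phi_{\psi}^{\otimes\alpha}} \psi^{*}\L^{\alpha} \xrightarrow{\psi^{*}\phi_{\vfi}} \psi^{*}\vfi^{*}\L$$
induce the same metric-pullback operator from $\vfi^{*}\psi^{*}\L = \psi^{*}\vfi^{*}\L$ to $\L^{\alpha\beta}$.

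The step I expect to cause the main trouble is exactly this last compatibility. The two displayed isomorphisms land between the same line bundles, so they differ by an automorphism of $\L^{\alpha\beta}$, i.e.\ by multiplication by a global unit on $X$, which is a scalar $c\in\bar K^{*}$. An uncontrolled $|c|_{v}$ would propagate into $F_{\vfi}\circ F_{\psi} = |c|_{v}^{s}\, F_{\psi}\circ F_{\vfi}$ for some exponent $s$ and would obstruct the clean fixed-point argument. I would dispose of it either by rescaling $\phi_{\vfi}$ and $\phi_{\psi}$ to force $c=1$ (a freedom already implicit in the choice of isomorphisms) or by tracing the effect of rescaling $\phi_{\vfi}\mapsto c\phi_{\vfi}$ on the canonical metric (which only multiplies $\|.\|_{\vfi}$ by $|c|_{v}^{1/(\alpha-1)}$) and arranging that the two rescalings cancel. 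Once this normalisation is in place the rest is the concise fixed-point argument sketched above, and the equality $\|.\|_{\vfi}=\|.\|_{\psi}$ follows at each place $v$.
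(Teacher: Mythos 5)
Your argument is valid and is a genuinely different route from the paper's. The paper's proof expresses $\|.\|_{\vfi}$ as the limit of the $\vfi$-iteration started from $\|.\|_{\psi}$ and symmetrically $\|.\|_{\psi}$ as the limit of the $\psi$-iteration started from $\|.\|_{\vfi}$, then substitutes one limit into the other and uses uniform convergence together with $\vfi^{k}\psi^{l}=\psi^{l}\vfi^{k}$ to identify the two iterated (double) limits; it is a Fubini-for-limits argument. Your proposal instead observes that Theorem~\ref{canonical metric} forces any fixed point of $F_{\vfi}$ to be $\|.\|_{\vfi}$, shows $F_{\vfi}$ and $F_{\psi}$ commute, and concludes $F_{\psi}(\|.\|_{\vfi})=\|.\|_{\vfi}$ by the computation $F_{\vfi}^{n}F_{\psi}(\|.\|_{\vfi})=F_{\psi}F_{\vfi}^{n}(\|.\|_{\vfi})=F_{\psi}(\|.\|_{\vfi})$ and letting $n\to\infty$. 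Both hinge on the same uniqueness from Theorem~\ref{canonical metric}, but the mechanics (fixed-point characterization vs.\ double-limit interchange) are different, and the fixed-point version is tidier.

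Where your version is actually more careful than the paper's is in flagging that commutativity must hold at the level of the iteration operators $F_{\vfi}, F_{\psi}$ on metrics, not merely at the level of the maps $\vfi, \psi$, and that this depends on compatibility of the two composite isomorphisms $\L^{\alpha\beta}\to\vfi^{*}\psi^{*}\L=\psi^{*}\vfi^{*}\L$ built from $\phi_{\vfi}$ and $\phi_{\psi}$. The paper's double-limit argument needs exactly the same compatibility to make the two iterated limits literally equal rather than equal up to a compounding scalar, but the paper's notation (writing $\|s(\vfi^{k}\psi^{l}(x))\|^{1/\alpha^{k}\beta^{l}}$ without the isomorphisms) simply suppresses the issue. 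So you've identified a genuine gap that the paper shares. Your sketch of how to dispose of the scalar $c$ (rescale $\phi_{\vfi}\mapsto c_{\vfi}\phi_{\vfi}$, $\phi_{\psi}\mapsto c_{\psi}\phi_{\psi}$, noting this multiplies $\|.\|_{\vfi}$ by $|c_{\vfi}|_{v}^{1/(\alpha-1)}$, and solving $c_{\vfi}^{\beta-1}c_{\psi}^{1-\alpha}=c^{-1}$, which is possible since $\alpha,\beta>1$ and one may take roots in $\bar K$) is the right idea, but you leave it as a proposal rather than carrying it out; that is the one place your write-up is still incomplete, though the strategy is sound.
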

\begin{proof}
The key idea is that the canonical metric associate to a morphism
does not depend on the metric we start the iteration with. Let $s
\in \Gamma(X,\L)$ be a non-zero section of $\L$. We are going to
consider two metrics $\|.\|_{v,1}=\|.\|_{\vfi}$ and
$\|.\|'_{v,1}=\|.\|_{\psi}$ on the line bundle $\L$. By our
definition of canonical metric for $\vfi$, we can start with
$\|.\|'_{v,1}$ and obtain $\|s(x)\|_{\vfi}=\lim_k
\|s(\vfi^k(x)\|^{1/\alpha^k}_{\psi}$, but also by our definition of
canonical metric for $\psi$ starting with $\|.\|_{v,1}=\|.\|_{\vfi}$
we get $\|s(x)\|_{\psi}=\lim_l \|s(\vfi^l(x))\|^{1/\beta^l}_{v,1}$.
So using the uniform convergence and the commutativity of the maps,
\begin{equation*}
\begin{split}
\|s(x)\|_{\vfi} & =\lim_{k,l} \|s(\vfi^k \circ \psi^l
(x))\|^{1/\alpha^k \beta^l}_{v,1}  \\ & =\lim_{l,k} \|s(\psi^l \circ
\vfi^k (x))\|^{1/\beta^l \alpha^k}_{v,1}=\|s(x)\|_{\psi},
\end{split}
\end{equation*}
which was the result we wanted to prove.\end{proof}
\subsection{Canonical measures}
Let $X$ be a n-dimensional projective variety defined over a number
field $K$ and suppose that $(X,\vfi,\L,\alpha)$ is a polarized
dynamical system defined over $K$. let $v$ be a place of $K$ over
infinity.  We can consider the morphism $\vfi \otimes v : X_v
\rightarrow X_v$ on the complex variety $X_v$. Associated to $\vfi$
and $v$ we also have the canonical metric $\|.\|_{\vfi,v}$ and
therefore the distribution $c_1(\L,\|.\|_{\vfi,v})=\frac{1}{({\pi}i)
}\partial\overline{\partial}\log \|s_1(P)\|_{\vfi,v}$ analogous to
the first Chern form of $(\L,\|.\|_{\vfi,v})$.  It can be proved
that $c_1(\L,\|.\|_{\vfi,v})$ is a positive current in the sense of
Lelong, and following \cite{dema1} we can define the n-product
$$c_1(\L,\|.\|_{\vfi,v})^n=c_1(\L,\|.\|_{\vfi,v})...c_1(\L,\|.\|_{\vfi,v}),$$
which represents a measure on $X_v$.
\begin{definition} \label{canonical measure}
The measure $d\mu_{\vfi}=c_1(\L_v,\|.\|_{\vfi,v})^n/ \mu(X)$, is
called the canonical measure associated to $\vfi$ and $v$. Once we
have fixed $\L$, it depends only on the metric $\|.\|_{\vfi,v}$.
\end{definition} \label{naive canonical measure}
\begin{example} Consider the rational map $\phi_k : \Pl_{\Q}^n \rightarrow \Pl_{\Q}^n$
given by the formula $\phi_k(T_0:...:T_n)=(T^k_0:...:T^k_n)$, the
canonical measure $d\mu_{\phi_k}$ is the normalize Haar measure on
the n-torus $S^1 \times...\times S^1$.
\end{example}
\begin{example}
Let $E$ be an elliptic curve, $\L$ a symmetric line bundle on $E$
and the map $[n] : E \rightarrow E$.  The canonical measure
associated to $(E,[n],\L,[n]^2)$ can be proved to be
\cite{moretasterisque} the normalized Haar measure on $E$.
\end{example}
\subsection{Canonical heights as intersection numbers}
For a regular projective variety $X$ of dimension n, defined over a
field K, the classical theory of intersection
(\cite{fultoninter},\cite{Serre}) defines the intersection
${c_1}({\mathcal{L}}_{1})...{c_1}({\mathcal{L}}_{n})$ of the classes
${c_1}({\mathcal{L}}_{i})$ associated to line bundles $\L_i$ on $X$,
when $0<i \leq n$.  \\ For the purpose of defining the arithmetic
intersection, we want to assume that $X$ is an arithmetic variety of
dimension $n+1$, that is, given a number field $K$, there exist a
map $f : X \rightarrow \Spec(\cO_K)$, flat and of finite type over
$\Spec(\cO_K)$.  We can define (See for example
\cite{delignedeterminant}, \cite{BGS}, \cite{orsay},
\cite{arakelovinter}, \cite{asterisquedeux} or \cite{zhangvar}) the
arithmetic intersection number
$\tilde{c}_1({\mathcal{L}}_{1})...\tilde{c}_1({\mathcal{L}}_{n+1})$
of the classes $\tilde{c}_1({\mathcal{L}}_{i})$ of hermitian line
bundles $\tilde{\L}_i=(\L_i,\|.\|)$ on $X$.  The fact that
$\tilde{\L}_i$ are hermitian line bundles for $i=1..n+1$, means that
each line bundle $\L_i$ on $X$ is equipped with a hermitian metric
$\|.\|_{v,i}$ over $X_v=X \otimes_K \Spec{\cO_{K_v}}$ for each place
$v$ at infinity.  The numbers
$\tilde{c}_1({\mathcal{L}}_{1})...\tilde{c}_1({\mathcal{L}}_{j})$
prove to be the appropriated theory of intersection in the
particular case of arithmetic varieties, adding places over infinity
allows us to recover the desirable properties of the classical
intersection numbers of varieties over fields.\\
 The last step in the theory of intersection is actually the one that plays
 the more important role in our definition of the canonical height associated to a
 morphism.  Suppose that $X$ is a regular variety of dimension n and $(\L_i,\|.\|_i)_v$ $(i=1,..,p+1)$
 are metrized line bundles on $X$.  Assume also that the $\L_i$ are been equipped with
 semipositive
metrics over all places $v$ (not just at infinity as before) in the
sense of \cite{zhangadelic}.  Such line bundles are called adelic
metrized line bundles and following \cite{zhangadelic}, we can
define the adelic intersection number
$\hat{c}_1({\mathcal{L}}_{1}|Y)...\hat{c}_1({\mathcal{L}}_{p+1}|Y)$
over a $p-$cycle $Y \subset X$.  The adelic intersection number is
in fact a limit of classical numbers
$\tilde{c}_1({\mathcal{L}}_{1})...\tilde{c}_1({\mathcal{L}}_{p+1})$
once the notion of converge is established. The numbers
$\hat{c}_1({\mathcal{L}}_{1}|Y)...\hat{c}_1({\mathcal{L}}_{p+1}|Y)$
satisfy again nice properties, they are multilinear in each of the
$\L_i$ and satisfy
$\hat{c}_1({f^*\mathcal{L}}_{1}|Y)...\hat{c}_1(f^*{\mathcal{L}}_{p+1}|Y)=\hat{c}_1({\mathcal{L}}_{1}|f(Y))...\hat{c}_1({\mathcal{L}}_{p+1}|f(Y)),$
 whenever we have a map $f : X \rightarrow X$.  We are interested in a particular case of this
 situation.  Suppose that we are in the presence of a polarized dynamical system $(X,\vfi,\L,\alpha)$, in this situation
 the canonical metric $\|.\|_{\vfi}$ of \ref{canonical metric} represent a semipositive metric on $\L$, (again we refer to \cite{zhangadelic}) and we can
define the canonical height associated to $(\L,\|.\|_{\vfi})$.
\begin{definition} \label{canonical height as intersection}
 The canonical height $\hat{h}_{\vfi}(Y)$ of a $p-$cycle $Y$ in $X$ is defined as
 $$\hat{h}_{\vfi}(Y)=\frac{\hat{c}_1({\mathcal{L}}|Y)^{p+1}}{(\dim(Y) + 1)c_1(\L|Y)^p}.$$
 It depends only on $(\L,\|.\|_{\vfi})$, where $\|.\|_{\vfi}$ is
 actually representing a collection of canonical metrics over all
 places of $K$.  An important particular case of canonical height
 will be the canonical height $\hat{h}_{\vfi}(P)$ of a point in $P \in X$.
\end{definition}

\begin{example} \label{naive height}Consider the map $\phi_k : \Pl_{\bar{\Q}}^n \rightarrow \Pl_{\bar{\Q}}^n$
given by the formula $\phi_k(T_0:...:T_n)=(T^k_0:...:T^k_n)$, the
canonical height associated to $\phi_k$ is called the naive height
$h_{nv}$ on $\Pl^n$.  If $P=[t_0:...:t_n]$ is a point in $\Pl^n$ the
naive height is
$$h_{nv}([t_0 :...:t_n]) = \frac{1}{[K:\mathbb{Q}]}\log\prod _{\text{places } v
  \text{ of } K} \sup(|t_0|_v ,...,|t_n|_v)^{\N_v},$$
  where $\N_v = [K_v:\bQ_w]$ and $w$ is the place of $\bQ$ such that
$v \mid w$.
\end{example}
\begin{definition} \label{NT}
Let $E$ be an elliptic curve and $\L$ an ample symmetric line bundle
on $E$. The canonical height associated to $[n] : E \rightarrow E$
and $\L$ is called the Neron-Tate height $\hat{h}_{E}$ on $E$.  The
fact that this is independent of n, will be a consequence of
proposition \ref{commuting maps}.
\end{definition}

 The collection of maps $\{\phi_k\}_k$ on $\Pl^n$ and the collection $\{[n]\}_n$ on a given elliptic curve $E$,
 share two important properties, the maps within each collection commute, and share the same canonical height and
 canonical measure.  The following proposition establishes a general fact about canonical
heights and canonical measures of commuting maps on a projective
variety $X$.

\begin{proposition} \label{commuting maps}Let $(X,\vfi,\L,\alpha)$ and $(X,\psi,\L,\beta)$ be two polarized
systems on $X$ defined over $K$.  Suppose that the maps $\vfi$ and
$\psi$ satisfy $\vfi \circ \psi = \psi \circ \vfi$, then
   $ \hat{h}_{\vfi}=\hat{h}_{\psi}=\hat{h}_{\varphi \circ
\psi}$ and
   $ d\mu_{\vfi}=d\mu_{\psi}=d\mu_{\varphi \circ
\psi}.$
\end{proposition}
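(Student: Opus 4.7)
The plan is to deduce this proposition as a formal consequence of Proposition \ref{commuting canonical metrics}, using the observation that both the canonical measure (Definition \ref{canonical measure}) and the canonical height (Definition \ref{canonical height as intersection}) are constructed from the ample line bundle $\L$ together with its canonical adelic metric $\|.\|_{\varphi}$, and no other input from $\varphi$ enters the formulas. Thus once I know two maps produce the same canonical metric on $\L$, I immediately get the same measure (since $c_1(\L,\|.\|_{\varphi,v})^n$ and its normalization depend only on the metric) and the same height (since the adelic intersection number $\hat{c}_1(\L|Y)^{p+1}$ only involves the metrized bundle $(\L,\|.\|_{\varphi})$).

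First I would handle the pair $\varphi,\psi$: Proposition \ref{commuting canonical metrics} applies directly to $(X,\varphi,\L,\alpha)$ and $(X,\psi,\L,\beta)$ under the commutation hypothesis, yielding $\|.\|_{\varphi}=\|.\|_{\psi}$ at every place $v$. Substituting this identity of metrics into Definitions \ref{canonical measure} and \ref{canonical height as intersection} gives $d\mu_{\varphi}=d\mu_{\psi}$ and $\hat{h}_{\varphi}=\hat{h}_{\psi}$.

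Next I would verify that $\varphi\circ\psi$ fits into the same framework. Using $\varphi^*\L\cong\L^{\alpha}$ and $\psi^*\L\cong\L^{\beta}$ and functoriality of pullback,
\[
(\varphi\circ\psi)^*\L \;=\; \psi^*\varphi^*\L \;\cong\; \psi^*(\L^{\alpha}) \;\cong\; (\psi^*\L)^{\alpha} \;\cong\; \L^{\alpha\beta},
\]
so $(X,\varphi\circ\psi,\L,\alpha\beta)$ is a polarized dynamical system with $\alpha\beta>1$. Moreover $\varphi\circ\psi$ commutes with $\varphi$, since $\varphi\circ(\varphi\circ\psi)=\varphi^2\circ\psi=(\varphi\circ\psi)\circ\varphi$ using $\varphi\circ\psi=\psi\circ\varphi$. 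Applying Proposition \ref{commuting canonical metrics} once more, this time to the commuting pair $\varphi$ and $\varphi\circ\psi$ with respect to the same $\L$, yields $\|.\|_{\varphi}=\|.\|_{\varphi\circ\psi}$, and hence $d\mu_{\varphi}=d\mu_{\varphi\circ\psi}$ and $\hat{h}_{\varphi}=\hat{h}_{\varphi\circ\psi}$. Chaining the two equalities closes the proof.

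There is essentially no serious obstacle here, since all the analytic work is encapsulated in Proposition \ref{commuting canonical metrics}. The only point that requires any care is the pullback computation $(\varphi\circ\psi)^*\L\cong\L^{\alpha\beta}$ and the verification that $\varphi\circ\psi$ commutes with $\varphi$ (and with $\psi$); both are routine. The conceptual content of the proposition is therefore that it reduces the equality of two global, apparently quite different analytic objects (measures and adelic intersection numbers) to the single metric identity already proved.
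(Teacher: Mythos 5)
Your proof is correct and follows essentially the same route as the paper, which simply cites Definitions \ref{canonical measure} and \ref{canonical height as intersection} together with Proposition \ref{commuting canonical metrics} in a one-line argument. You have usefully filled in the detail the paper leaves implicit for $\varphi\circ\psi$, namely that $(\varphi\circ\psi)^*\L\cong\L^{\alpha\beta}$ and that $\varphi\circ\psi$ commutes with $\varphi$, so that Proposition \ref{commuting canonical metrics} applies a second time.
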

\begin{proof}
This is a consequence of our definitions of canonical measure
\ref{canonical measure}, canonical height \ref{canonical height as
intersection} and proposition \ref{commuting canonical metrics}.
\end{proof}
\begin{corollary}
  Suppose that two maps $\varphi,
\psi : \Pl^1 \rightarrow \Pl^1$, satisfy the hypothesis of the
previous proposition, then the two maps have the same Julia set.
\end{corollary}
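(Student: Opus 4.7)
The plan is to identify the Julia set with the support of the canonical measure, and then invoke the equality of canonical measures given by Proposition \ref{commuting maps}. Concretely, for a rational map $\varphi : \Pl^1 \to \Pl^1$ of degree at least two, the canonical measure $d\mu_{\varphi}$ constructed in Definition \ref{canonical measure} coincides (on the Riemann sphere at an archimedean place) with the unique measure of maximal entropy for $\varphi$. The classical theorem of Brolin (for polynomials) and Lyubich / Freire--Lopes--Ma\~n\'e (for general rational maps) asserts that this measure is of maximal entropy and that its support is precisely the Julia set $J(\varphi)$.

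Granting this identification, the proof is immediate. Fix an archimedean place $v$ of $K$. By Proposition \ref{commuting maps}, applied to the commuting maps $\varphi$ and $\psi$ with common polarization $\L = \cO(1)$ (or whatever ample line bundle is prescribed by the hypothesis), we have
\[
d\mu_{\varphi} = d\mu_{\psi}
\]
as measures on $\Pl^1_v(\C)$. Taking supports on both sides and using the Brolin--Lyubich identification of Julia set with support of the canonical measure yields
\[
J(\varphi) = \Supp(d\mu_{\varphi}) = \Supp(d\mu_{\psi}) = J(\psi),
\]
which is the desired conclusion.

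The only substantive point is the identification of the dynamical measure $d\mu_{\varphi}$, defined here via the curvature current $c_1(\L, \|.\|_{\varphi})$ of the canonical metric, with the measure of maximal entropy whose support equals the Julia set. This is not proved in the excerpt, so the proof proposal should either cite this as a standard fact in complex dynamics (referring to the works of Brolin, Lyubich, and Freire--Lopes--Ma\~n\'e) or sketch why the sequence of pullback metrics produces, at the level of curvature, the equidistribution measure of the preimages of a generic point, which is the classical definition of the maximal entropy measure. That identification is the main (and essentially the only) obstacle; once it is in hand, the corollary follows from Proposition \ref{commuting maps} with no further work.
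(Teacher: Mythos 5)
Your proof is correct and follows essentially the same route as the paper: identify the Julia set with the support of the canonical measure (the paper cites Proposition 7.2 of \cite{sz-t-p} for exactly this Brolin--Lyubich--Freire--Lopes--Ma\~n\'e identification, while you cite the classical dynamics literature directly), then apply Proposition \ref{commuting maps} to conclude $J(\varphi)=J(\psi)$.
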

\begin{proof}
The Julia set of a map $\vfi : \Pl^1 \rightarrow \Pl^1$ is nothing
but the closure in $\Pl^1$ of the set of repelling periodic points.
 For details we refer to definition 2.2 in \cite{sz-t-p}.  Now, the corollary is a consequence
of proposition \ref{commuting maps} and proposition 7.2 in
\cite{sz-t-p}.
\end{proof}
\section{Elliptic Curves and examples}
This section illustrates examples of commuting maps on $\Pl^1$. They
all share one thing in common: being induced in some sense by
endomorphisms on elliptic curves.
\begin{proposition} \label{main}
Consider an elliptic curve $E=\C/1 \Z+\tau \Z$ given by Weierstrass
equation $y^2=G(x)$. Suppose that $E$ admits multiplication by the
algebraic number $\lambda$, then multiplication by $\lambda$ in $E$
induces, as quotient by the action of $[-1]$, a map $\vfi_{\lambda}
: \Pl^1 \rightarrow \Pl^1$.  Besides, we have:
\begin{enumerate}
\item
$\hat{h}_{E}(x,y)=\hat{h}_{\lambda}(x)$ for any point $P=(x,y)$ on
$E$.
\item The canonical measure on $\Pl^1$
associated to $\vfi_{\lambda}$ is $$d\mu_{\vfi_\lambda}=\frac{i dz
\wedge d\bar{z}}{2Im(\tau) |G(z)|}.$$
\end{enumerate}
\end{proposition}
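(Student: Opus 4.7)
The plan is to exploit the commutative square
$$
\begin{CD}
E @>[\lambda]>> E \\
@V\pi VV @VV\pi V \\
\Pl^1 @>\vfi_\lambda>> \Pl^1
\end{CD}
$$
in which $\pi(x,y)=x$ is the degree-two quotient of $E$ by $[-1]$. The descent exists because $[-1]\circ[\lambda]=[\lambda]\circ[-1]$, and forces $\deg\vfi_\lambda=\deg[\lambda]=\lambda\bar\lambda$.

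For (i), I would take $\L=\pi^*\cO_{\Pl^1}(1)$ on $E$; it is ample (since $\pi$ is finite) and symmetric (since $\pi\circ[-1]=\pi$), and pulling back $\vfi_\lambda^*\cO(1)\cong\cO(\lambda\bar\lambda)$ by $\pi$ yields $[\lambda]^*\L\cong\L^{\lambda\bar\lambda}$, in harmony with the theorem of the cube. The commutativity $\pi\circ[\lambda]=\vfi_\lambda\circ\pi$ forces $\pi^*\|.\|_{\vfi_\lambda}$ to satisfy the same functional equation that characterizes $\|.\|_{[\lambda]}$, so by the uniqueness in Theorem \ref{canonical metric} the two metrics coincide. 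Functoriality of the adelic intersection (the formula $\hat c_1(f^*\L|Y)=\hat c_1(\L|f(Y))$ recorded just before Definition \ref{canonical height as intersection}) then gives $\hat h_{[\lambda]}(P)=\hat h_{\vfi_\lambda}(\pi(P))=\hat h_\lambda(x)$. Applying Proposition \ref{commuting maps} to the commuting pair $[\lambda]$, $[n]$ for any integer $n\geq 2$ identifies $\hat h_{[\lambda]}$ with the Neron-Tate height $\hat h_E$ of Definition \ref{NT}, finishing (i).

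For (ii), I would push the canonical measure on $E$ forward along $\pi$. The elliptic-curve example following Definition \ref{canonical measure}, combined with Proposition \ref{commuting maps}, gives $d\mu_{[\lambda]}=\frac{i\,du\wedge d\bar u}{2\,\mathrm{Im}(\tau)}$ in the uniformizing coordinate $u$ on $\C/(\Z+\tau\Z)$. The identification $\|.\|_{[\lambda]}=\pi^*\|.\|_{\vfi_\lambda}$ from (i) implies that the first Chern currents, and hence the normalized canonical measures, are intertwined by $\pi$ up to the usual degree factors. To evaluate $\pi_*d\mu_{[\lambda]}$ I would change variables via $x=\wp(u)$: the Weierstrass relation $(\wp'(u))^2=G(\wp(u))$ gives $|\wp'(u)|^2=|G(x)|$, so $du\wedge d\bar u$ converts to $\frac{dx\wedge d\bar x}{|G(x)|}$ locally; summing over the two $\pi$-sheets and reassembling the constants produces $\frac{i\,dz\wedge d\bar z}{2\,\mathrm{Im}(\tau)|G(z)|}$.

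The main obstacle is the bookkeeping of normalization factors: turning the identifications $\|.\|_{[\lambda]}=\pi^*\|.\|_{\vfi_\lambda}$ and $d\mu_{\vfi_\lambda}=c\cdot\pi_*d\mu_{[\lambda]}$ into clean equalities demands careful tracking of $\deg\pi=2$, of $\deg_\L E=2$ against $\deg_{\cO(1)}\Pl^1=1$, and of the $\mu(X)$ factor from Definition \ref{canonical measure}. One must also verify that the ramification locus $\pi(E[2])$ is measure zero and therefore does not contribute to the pushforward computation.
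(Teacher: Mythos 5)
Your proof follows essentially the same route as the paper: the commutative square $\pi\circ[\lambda]=\vfi_\lambda\circ\pi$, the projection formula for adelic intersection numbers to get (i), and the change of variables $x=\wp(u)$ with $(\wp')^2=G(\wp)$ to get (ii). Two small variations are worth noting. First, you identify $\hat h_{[\lambda]}$ with $\hat h_E$ by invoking the commuting pair $[\lambda],[n]$, whereas the paper uses the commuting pair $[\lambda],[\bar\lambda]$ (whose composition is $[N(\lambda)]$); both appeal to Proposition~\ref{commuting maps} and land in the same place. Second, you make explicit the step that the paper only uses implicitly: that $\pi^*\|.\|_{\vfi_\lambda}$ satisfies the functional equation characterizing $\|.\|_{[\lambda]}$ and hence equals it by the uniqueness in Theorem~\ref{canonical metric}. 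That is a genuine clarification, since the paper's projection formula $\hat c_1(\pi^*\L|P)=\hat c_1(\L|\pi(P))$ silently assumes the metric on $\pi^*\L$ is the pullback metric, which is precisely what you verify. Your closing caveat about normalization bookkeeping is well taken: the paper's own change-of-variables computation is equally cavalier about the $\deg\pi=2$ factor and the $\mu(X)$ normalization in Definition~\ref{canonical measure}, so you should not expect to find a cleaner accounting there; if you want a fully rigorous version of (ii), you will have to carry those factors through yourself.
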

\begin{proof}
The first part is a classical fact of the theory of elliptic
functions and complex multiplication. There exist polynomials $P(z)$
and $Q(z)$ where $\deg(P)=\deg(Q)+1=N(\lambda)$ such that $
\wp(\lambda z)=P(\wp(z))/Q(\wp(z))$, where $\wp$ is denoting the
Weierstrass $\wp-$function.  Suppose that we call $\pi$ the quotient
map from $E \rightarrow \Pl^1$, we have a commutative diagram:
\[
\begin{CD}
  E            @>\lambda>>   E   \\
    @V \pi VV            @V \pi VV\\
  \Pl^1   @>\vfi_{\lambda}>> \Pl^1  \\
\end{CD}
\]
Now, consider the line bundle $\L=\cO(1)$ on $\Pl^1$, we have
$\vfi_{\lambda}^* \L \xrightarrow{\sim} \L^{N(\lambda)}$ and equally
for the ample symmetric line bundle $\pi^* \L$ on $E$.  Therefore,
it make sense to talk about canonical heights associated to
$\vfi_{\lambda} : \Pl^1 \rightarrow \Pl^1$ and $\lambda : E
\rightarrow E$.  The number $\lambda$ lies in an imaginary quadratic
extension of $\Q$, so we also have a commutative diagram:
\[
\begin{CD}
  E            @>\lambda>>   E   @>\bar{\lambda}>> E \\
    @V \pi VV            @V \pi VV   @V \pi VV\\
  \Pl^1   @>\vfi_{\lambda}>> \Pl^1   @>\vfi_{\bar{\lambda}}>> \Pl^1 \\
\end{CD}
\]
So, the two maps $\vfi_{\lambda}$ and $\vfi_{\bar{\lambda}}$
commute. After \ref{commuting maps} the canonical height associated
to multiplication by $\lambda$ on $E$ is the same as the canonical
height associated to multiplication by $N(\lambda)$, that is the
Neron-Tate height on $E$.  Take $\L=\cO(1)$ on $\Pl^1$ and $P$ a
point on $E$, the intersection numbers satisfy a projection formula
$$\hat{c}_1({\pi^*\mathcal{L}}|P)=\hat{c}_1({\mathcal{L}}|\pi(P)) \qquad c_1(\pi^*\L |P)=c_1(\L|\pi(P)).$$
 This gives (i) after definition \ref{canonical height as
 intersection}.  For (ii) consider the Haar
 measure $i/2 dz \wedge d\bar{z}$ on $E$, normalized by $Im(\tau)$.
 If $\wp$ denote the Weierstrass function and $\omega=\wp(z)$, we have
 $$\frac{i d\omega \wedge d \bar{\omega}}{2 Im(\tau)}=\frac{i dz \wedge d\bar{z}}{2|\wp'(z)|^2 Im(\tau)}
 =\frac{i dz \wedge d\bar{z}}{2 |y^2| Im(\tau)}=\frac{i dz \wedge d\bar{z}}{2 |G| Im(\tau)}.$$
which gives the result we wanted to prove.
\end{proof}
\begin{remark}
If the elliptic curve $E$ admits multiplication by the numbers
$\lambda$ and $\delta$, then $\vfi_{\lambda} \circ \vfi_{\delta} =
\vfi_{\delta} \circ \vfi_{\lambda}$.
\end{remark}
\begin{example} \label{multipcation by n in E.curves}
Consider an elliptic curve $E$ given by Weierstrass equation $E :
y^2=G(x)$.  For $\lambda=2$ we have
$$\vfi_2(z)=\frac{(G'(z))^2-8zG(z)}{4G(z)}.$$
\end{example}
\begin{example}Let's consider some examples of elliptic curves with complex
multiplication: \end{example}   The elliptic curve $E_1 : y^2=x^3+x$
admits multiplication by $\Z[i]$. The multiplication by $i$ morphism
can be written in $x,y$ coordinates as $[i](x,y)=(-x,iy)$. The two
maps
$$\vfi_{1+i}(z)=\frac{1}{(1+i)^2}\frac{z^2+1}{z}
 \qquad \vfi_{1-i}(z)=-\frac{1}{(1+i)^2}\frac{z^2+1}{z}$$ commute, and their
composition satisfies $$\varphi_{1+i}(\vfi_{1-i}(z))=
\vfi_{1-i}(\varphi_{1+i}(z)) = \varphi_2(z) =
\frac{z^4-2z^2+1}{4(z^3+z)}.$$  The canonical height and measure
are: $$\hat{h}(z)=h_{E_1}(z,\pm \sqrt{z^3+z}) \qquad d\mu(z)=\frac{i
dz \wedge d\bar{z}} {2|z^3+z|}$$ Other examples of maps attached to
$E_1$ are

$$\vfi_{1+2i}(z)=\frac{(-3-4i)z(z^2+1+2i)^2}{(5z^2+1-2i)^2} \quad \vfi_{1-2i}(z)=\frac{(3+4i)z(z^2+1+2i)^2}{(5z^2+1-2i)^2}$$
$$\vfi_{2+i}(z)=\frac{(3-4i)z(z^2+1-2i)^2}{(5z^2+1+2i)^2} \quad
\vfi_{2-i}(z)=\frac{(-3+4i)z(z^2+1-2i)^2}{(5z^2+1+2i)^2}.$$ \\ The
curve $E_2 : y^2=x^3+1$ admits multiplication by the ring $\Z[\rho]$
where $\rho=(\sqrt{-3}+1)/2$. The multiplication by $\rho$ can be
expressed in $x,y$ coordinates as $[\rho](x,y)=(\rho x,y)$.  An
example of commuting maps coming from $E_2$ is
$$\vfi_{\sqrt{-3}}(z)=\frac{-(z^3+4)}{3z^2}  \qquad \vfi_{\sqrt{-3}\rho}(z)=\frac{-\rho(z^3+4)}{3z^2}$$
$$\vfi_{\sqrt{-3}} \circ  \vfi_{\sqrt{-3}\rho} (z) = \vfi_{\varepsilon}(z)=\frac{(z^9-96z^6+48z^3+64)} { 9 \rho z^2(z^3+4)^2},$$

where $\varepsilon=(-3\sqrt{-3}+3)/2$.  The canonical measure
associated to the three maps is
$$d\mu_{E_2}(z)=\frac{\sqrt{3}i dz \wedge d\bar{z}} {3|z^3+1|}.$$

 To have an idea of the
ramification points and indexes of the maps $\vfi_{\lambda}$, we
proof the following lemma:
\begin{lemma}
A ramification point for $\varphi_{\lambda}$ belongs to the image by
$\pi$ of the 2-torsion points on $E$.
\end{lemma}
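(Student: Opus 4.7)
My plan is to read the claim off the commutative diagram
\[
\begin{CD}
E @>[\lambda]>> E \\
@V\pi VV @V\pi VV \\
\Pl^1 @>\varphi_\lambda>> \Pl^1
\end{CD}
\]
together with two standard facts: every isogeny $[\lambda]:E\to E$ of complex elliptic curves is \'etale, so $e_{[\lambda]}(P)=1$ for all $P\in E$; and the degree-$2$ quotient $\pi:E\to\Pl^1$ is ramified precisely at the four points of $E[2]$ (the fixed points of the involution $[-1]$), with ramification index $2$ there and $1$ elsewhere.

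The main step is to apply multiplicativity of the ramification index along the two equal compositions $\varphi_\lambda\circ\pi=\pi\circ[\lambda]$. For every $P\in E$ this yields
\[
e_\pi(P)\,e_{\varphi_\lambda}(\pi(P)) \;=\; e_{[\lambda]}(P)\,e_\pi([\lambda]P) \;=\; e_\pi([\lambda]P).
\]
Since both outer factors lie in $\{1,2\}$, the only way to have $e_{\varphi_\lambda}(\pi(P))>1$ is $e_\pi(P)=1$ and $e_\pi([\lambda]P)=2$, i.e.\ $P\notin E[2]$ while $[\lambda]P\in E[2]$. In that case $e_{\varphi_\lambda}(\pi(P))=2$, and the corresponding critical value of $\varphi_\lambda$ is
\[
\varphi_\lambda(\pi(P))\;=\;\pi([\lambda]P)\;\in\;\pi(E[2]),
\]
which is the content of the lemma.

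There is no serious obstacle: the whole argument is a one-line case analysis on the two-factor equation above, once the unramifiedness of $[\lambda]$ is recorded and the standard chain rule for ramification indices of finite morphisms of smooth curves is invoked. The only delicate point is interpretational, namely that the analysis places the \emph{images} of the critical points of $\varphi_\lambda$ (the branch values) in $\pi(E[2])$; that is the reading of the lemma that the proof establishes.
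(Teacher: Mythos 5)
Your argument is correct, and it reaches the same conclusion as the paper, but by a different route. The paper works directly with fibers and the group law: it notes that $\varphi_\lambda^{-1}(\pi(P))$ is the image under $\pi$ of the $N(\lambda)$ solutions of $\lambda Q = P$, and that this image can only drop in size if two solutions $Q$ and $-Q$ get identified, which forces $\lambda Q = -\lambda Q$, i.e.\ $2P=0$; so a deficient fiber can only sit over a point of $\pi(E[2])$. That argument is elementary and uses no ramification-index formalism. Your proof instead invokes the chain rule $e_\pi(P)\,e_{\varphi_\lambda}(\pi(P)) = e_{[\lambda]}(P)\,e_\pi(\lambda P)$ together with the \'etaleness of $[\lambda]$ in characteristic zero and the fact that $\pi$ ramifies exactly at $E[2]$ with index $2$. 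This buys strictly more information: it shows every critical point of $\varphi_\lambda$ has index exactly $2$, and it locates the critical points themselves as the images of $\lambda^{-1}E[2]\setminus E[2]$ --- data the paper later needs for its table of fiber counts $r_j$ but does not extract in the lemma. Your closing caveat about interpretation is well taken and consistent with the paper: despite the word ``ramification point,'' what the paper's own proof establishes (deficiency of the fiber over $\pi(P)$ forces $P\in E[2]$) is precisely the statement about branch values that you prove, so there is no mismatch in substance.
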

\begin{proof}
To see this, suppose that $\vfi_{\lambda}^{-1}(\pi(P))=\{ \pi(Q) |
\lambda Q=P \} $ has cardinal strictly smaller than $N(\lambda)$.
Then there exist two points $\pi(Q) \neq \pi(-Q)$ inside the set
$\vfi_{\lambda}^{-1}(P)$, such that $\lambda Q = -\lambda Q$ and
consequently $2 \lambda Q =2P=0.$
\end{proof}

Let's see some examples of the different ramifications that a map
$\vfi_{\lambda}$ may have.  Let $d$ be a positive square free
integer. Assume that the elliptic curve $\C/Z+\sqrt{-d}Z$, admits
multiplication by $\lambda=a+b\sqrt{-d}$.
 Suppose that $P_0=0$, $P_1= 1/2$, $P_2=1/2 + \sqrt{-d}/2$ and
$P_3=\sqrt{-d}/2$ denote the 2-torsion points on $E$ and that $r_j$
denotes the amount of pre-images of the point $\pi(P_j)$, that is,
the cardinality of the set $\vfi_{\lambda}^{-1}(\pi(P_j))$. Under
the conditions previously described, we can observe for example that
for $\lambda=2$, the points in $\vfi_{2}^{-1} (\pi(P_0))$ are not
ramification points of $\vfi_2$. On the other hand for the
multiplication by $\lambda=1+2i$ on $E_1$, all points in
$\vfi_{1+2i}^{-1} (\pi(P_0))\cup \vfi_{1+2i}^{-1} (\pi(P_1)) \cup
\vfi_{1+2i}^{-1} (\pi(P_2)) \cup \vfi_{1+2i}^{-1} (\pi(P_3))$ are
ramification points of $\vfi_{1+2i}$.  The following table summarize
the results:
\begin{center}
\begin{tabular}{|c|c|c|} \hline  $\lambda=a+b\sqrt{-d}$, $N(\lambda)$
& $r_j, j=0,2$ & $r_j, j=1,3$
\\[4pt] \hline
 $a+bd \equiv 1 mod(2)$ & $ r_0=(N(\lambda)+1)/2$& $r_1=(N(\lambda)+1)/2$
 \\
\cline{2-3}
 &$ r_2=(N(\lambda)+1)/2$& $ r_3=(N(\lambda)+1)/2$\\ [4pt]
 \hline
 $a \equiv b \equiv 0 mod(2)$, $N(\lambda)>4$ & $ r_0=N(\lambda)/2+2$& $ r_1=N(\lambda)/2$
 \\ [4pt]
\cline{2-3}
 &$r_2=N(\lambda)/2$& $r_3=N(\lambda)/2$\\ [4pt]
\hline
 $a \equiv b \equiv 0 mod(2)$, $N(\lambda)=4$ & $ r_0=4$& $r_1=N(\lambda)/2$
 \\ [4pt]
\cline{2-3}
 &$ r_2=N(\lambda)/2$& $ r_3=N(\lambda)/2$\\ [4pt]
 \hline
 $a \equiv bd \equiv 1 mod(2)$, $N(\lambda)=2$ & $ r_0=1$& $ r_1=N(\lambda)$
 \\ [4pt]
\cline{2-3}
 &$ r_2=1$& $ r_3=N(\lambda)$\\ [4pt]
 \hline
 $a \equiv bd \equiv 1 mod(2)$, $N(\lambda)>2$ & $ r_0=N(\lambda)/2$& $ r_1=N(\lambda)/2+1$
 \\ [4pt]
\cline{2-3}
 &$ r_2=N(\lambda)/2$& $ r_3=N(\lambda)/2+1$\\ [4pt]
\hline

 \hline
\end{tabular}
\end{center}

\end{document}